\let\OLDthebibliography\thebibliography
\renewcommand\thebibliography[1]{
  \OLDthebibliography{#1}
  \setlength{\parskip}{0pt}
  \setlength{\itemsep}{0pt plus 0.3ex}
}
\newtheorem{theorem}{\bf Theorem}[section]
\newtheorem{lemma}[theorem]{\bf Lemma}
\newtheorem{conjecture}[theorem]{\bf Conjecture}
\newtheorem{question}[theorem]{\bf Question}
\numberwithin{equation}{section}
\newcommand*\wrapletters[1]{\wr@pletters#1\@nil}
\def\wr@pletters#1#2\@nil{#1\allowbreak\if&#2&\else\wr@pletters#2\@nil\fi}
\begin{document}
\title[Additive and multiplicative Sidon sets]{Additive and multiplicative Sidon sets}

\author{Oliver Roche-Newton}
\address{Johann Radon Institute for Computational and Applied Mathematics (RICAM), Linz, Austria}
\email{o.rochenewton@gmail.com}

\author{Audie Warren}
\address{Johann Radon Institute for Computational and Applied Mathematics (RICAM), Linz, Austria}
\email{audie.warren@oeaw.ac.at}

\thanks{}
\date{}
\begin{abstract} 
We give a construction of a set $A  \subset \mathbb N$ such that any subset $A' \subset A$ with $|A'| \gg |A|^{2/3}$ is neither an additive nor multiplicative Sidon set. In doing so, we refute a conjecture of Klurman and Pohoata.
\end{abstract}
\maketitle

\section{Introduction}
\label{intro}

A set $A \subset \mathbb R$ is a \textit{Sidon set} if the only solutions to the energy equation
\[
a +b=c+d \text{ such that } a,b,c,d \in A
\]
are the trivial solutions whereby $\{a,b\}=\{c,d\}$.  Sidon sets are important and widely studied objects in Additive Combinatorics; see \cite{tv} for background on this topic and \cite{OB} for a thorough survey.

Given an arbitrary finite set $A \subset \mathbb R$, we define $s_+(A)$ to be the size of the largest Sidon set $A'$ such that $A' \subset A$. The problem of understanding the behaviour of $s_+([N])$ has attracted particular interest, and it is known that $s_+([N])= \Theta(N^{1/2})$. See \cite{Gow} for a nice introduction to this question.

Komlos, Sulyok and Szemer\'{e}di \cite{KSS} proved (as part of a much more general result about sets avoiding certain linear configurations) that, up to constant factors, $s_+(A)$ is minimised among sets of size $N$ by the case when $A=\{1,2,\dots,N\}$. That is, they proved the bound
\begin{equation} \label{KSS}
s_+(A) \gg |A|^{1/2}
\end{equation}
holds for any\footnote{In fact, this was proved in \cite{KSS} only for sets of integers, but it has later been established that the same result holds over the reals; see for instance \cite[Lemma 2.2]{Raz}.} finite set $A \subset \mathbb R$.

Similarly, a \textit{multiplicative Sidon set} is a set $A \subset \mathbb R$ such that the only solutions to 
\[
ab=cd \text{ such that } a,b,c,d \in A
\]
are the trivial solutions with $\{a,b\}=\{c,d\}$. Given an arbitrary finite set $A \subset \mathbb R$, we define $s_*(A)$ to be the size of the largest multiplicative Sidon set $A'$ such that $A' \subset A$.  Applying the estimate \eqref{KSS} to the set $\log A=\{\log a : a \in A\}$, it follows that
\begin{equation} \label{KSS*}
s_*(A) \gg |A|^{1/2}
\end{equation}
for any finite $A \subset \mathbb R$.

In the spirit of the sum-product problem, one may ask the question of whether it is guaranteed that, for a fixed set $A \subset \mathbb R$, at least one of the inequalities \eqref{KSS} and \eqref{KSS*} is improvable. This question was indeed asked by Klurman and Pohoata \cite{KP}, who made the following conjecture.
\begin{conjecture} \label{conj:main}There exists a constant $c>0$ such that for all finite $A \subset \mathbb R$,
\begin{equation} \label{conj}
\max \{s_+(A), s_*(A)\} \gg  |A|^{1/2 + c}.
\end{equation}
\end{conjecture}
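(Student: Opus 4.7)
The plan is to refute Conjecture~\ref{conj:main}. A genuine refutation would require, for every $c > 0$, a finite set $A$ with $\max\{s_+(A), s_*(A)\} \ll |A|^{1/2+c}$; a single construction giving $\max \ll |A|^{2/3}$ only disproves the conjecture in the regime $c > 1/6$, so I will first target the $|A|^{2/3}$ threshold and then consider how far this can be pushed.

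The strategy is to design an explicit set $A \subset \mathbb{N}$ that simultaneously carries additive and multiplicative structure, so that both $|A+A|$ and $|A \cdot A|$ are subquadratic. The link back to Sidon sets is elementary: if $A' \subseteq A$ is additively Sidon then $|A'|^2 \le 2|A'+A'| \le 2|A+A|$, and likewise in the multiplicative setting, so sumset and product-set bounds translate directly into bounds on $s_+(A)$ and $s_*(A)$. A natural candidate is a two-parameter hybrid of an arithmetic and a geometric progression, for instance $A = \{i \cdot g^j : 1 \le i \le I,\ 0 \le j \le J\}$ with $|A| \asymp IJ$: the $g^j$ factor supplies multiplicative collisions, while the $i$ factor supplies additive collisions within each $g$-adic block. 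With a judicious choice of $I$, $J$, and $g$, one should be able to force both $|A+A|$ and $|A\cdot A|$ down to order $|A|^{4/3}$, yielding $s_+(A), s_*(A) \ll |A|^{2/3}$ and refuting the conjecture for every $c > 1/6$.

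The main obstacle is twofold. First, simultaneously controlling $|A+A|$ and $|A\cdot A|$ runs directly against the sum-product philosophy, which predicts that the two quantities cannot both be much smaller than $|A|^2$; verifying the claimed $|A|^{4/3}$ bounds for a concrete candidate will require delicate number-theoretic bookkeeping, for instance tracking $p$-adic valuations when $g$ is a prime power and separating elements by their odd part. Second, and more seriously, the bound $|A|^{2/3}$ on its own does \emph{not} refute the conjecture for small $c$: for any $c \le 1/6$ the conjecture remains compatible with such a construction. To close that gap one would need either to iterate or tensor the construction so that the effective exponent approaches $1/2$, or to establish a structural statement forcing large Sidon subsets to fail at every scale. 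I expect the former to be the more tractable path, but depending on how far it can be pushed, the final result may have to be phrased as a refutation of Conjecture~\ref{conj:main} only for values of $c$ above an explicit threshold.
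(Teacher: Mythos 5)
Your overall goal (refute the conjecture, at least for $c>1/6$, via an explicit construction with $\max\{s_+(A),s_*(A)\}\ll |A|^{2/3}$) matches the paper's, but the route you propose does not work, and the obstruction is exactly the one you flag yourself. You want to bound \emph{both} $s_+(A)$ and $s_*(A)$ through the cardinalities $|A+A|$ and $|AA|$, which forces you to seek a set with $|A+A|\ll|A|^{4/3}$ and $|AA|\ll|A|^{4/3}$ simultaneously. This is not merely ``delicate bookkeeping'': it is impossible, since the post-Solymosi sum--product theorems (Konyagin--Shkredov and refinements) prove $\max\{|A+A|,|AA|\}\gg|A|^{4/3+c_0}$ for an absolute $c_0>0$. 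Your specific candidate $A=\{i\,g^j\}$ illustrates the problem: it is essentially the Balog--Wooley example (which the paper uses only to bound the energy quantities $t_\pm$ in Theorem 3.5), and while its rows give many additive and multiplicative \emph{collisions} (small energy only within structured pieces), its sumset is close to $|A|^2$ for any parameter choice that keeps $|AA|$ small, and vice versa. No tuning of $I,J,g$ gives both sets of size $|A|^{4/3}$, so the plan collapses before the question of iterating/tensoring even arises.

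The paper's construction avoids this trap by decoupling the two bounds: only $s_+$ is controlled through the sumset, while $s_*$ is controlled combinatorially even though $|AA|$ is huge. Concretely, $A=P\cdot Q$ where $P$ is the set of primes up to $n$ and $Q$ the primes in $(n, n^2/\log n]$. Then $A\subset[1,n^3/\log n]$ with $|A|\approx n^3/\log^3 n$, so $|A+A|\ll|A|\log^2|A|$ and $s_+(A)\ll|A+A|^{1/2}\ll|A|^{1/2}\log|A|$. For the multiplicative side, one views each element $pq$ as an edge of the complete bipartite graph on $P\sqcup Q$; a multiplicative Sidon subset $E\subset A$ cannot contain a four-cycle $pq,\,p'q,\,p'q',\,pq'$ (since $(pq)(p'q')=(p'q)(pq')$), so by the K\H{o}v\'{a}ri--S\'{o}s--Tur\'{a}n theorem $|E|\ll n^2/\log^2 n\approx|A|^{2/3}$. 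This is the key idea missing from your proposal: smallness of $s_*(A)$ is forced by unavoidable multiplicative quadruples inside any large subset, not by smallness of the product set, and this is what lets the construction coexist with the sum--product phenomenon. Note also that the resulting theorem, like your plan, only rules out the conjectured bound for $c>1/6$ (it refutes the stronger $|A|^{1-\epsilon}$ conjecture); neither the paper nor any known construction pushes the exponent down toward $1/2$, so your proposed ``iterate or tensor'' step should be regarded as an open problem rather than a fixable gap.
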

Klurman and Pohoata in fact also made the stronger conjecture that, for all $\epsilon>0$,
\begin{equation} \label{conj2}
\max \{s_+(A), s_*(A)\} \gg  |A|^{1- \epsilon}.
\end{equation}
The main result of this note is a construction disproving \eqref{conj2}.

\begin{theorem} \label{thm:main}
For all $N \in \mathbb N$, there exists a set $A \subset \mathbb N$ with $|A| \geq N$ and such that $s_+(A) \ll |A|^{1/2}\log |A|$ and $s_*(A) \ll |A|^{2/3}$. In particular,
\[
\max \{s_+(A), s_*(A)\} \ll |A|^{2/3}.
\]
\end{theorem}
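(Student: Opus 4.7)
I would construct $A$ as a product of two disjoint sets of primes of carefully chosen, very unbalanced sizes. Precisely, for a large parameter $N$, set $a = \lceil N^{1/3}\rceil$ and $b = \lceil N^{2/3}\rceil$; let $B = \{p_1,\dots,p_a\}$ consist of the first $a$ primes and $C = \{p_{a+1},\dots,p_{a+b}\}$ of the next $b$ primes, and define $A = B\cdot C = \{pq : p\in B,\ q\in C\}$. Since $B\cap C = \emptyset$, unique prime factorization forces all these products to be distinct, so $|A| = ab \ge N$; the prime number theorem moreover places $A$ inside an interval of length $O(N(\log N)^{2})$.

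The additive bound $s_+(A)\ll |A|^{1/2}\log|A|$ would follow immediately from the classical upper bound $s_+([M])\ll \sqrt{M}$ (the matching half of the fact $s_+([N])=\Theta(N^{1/2})$ quoted in the introduction): since $A$ sits inside an interval $[M]$ with $M \ll N(\log N)^{2}$, we get $s_+(A)\le s_+([M])\ll \sqrt{M}\ll \sqrt{N}\log N \ll |A|^{1/2}\log|A|$.

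For the multiplicative bound $s_*(A)\ll |A|^{2/3}$, I would associate to any multiplicatively Sidon subset $A'\subseteq A$ the bipartite graph $G$ on vertex parts $B\sqcup C$ with edge set $E(G) = \{(p,q)\in B\times C : pq\in A'\}$; unique factorization gives $|E(G)| = |A'|$. I would then show $G$ is $K_{2,2}$-free: any $K_{2,2}$ on distinct $p_1,p_2\in B$ and $q_1,q_2\in C$ would produce four pairwise distinct elements $p_iq_j\in A'$ satisfying $(p_1q_1)(p_2q_2) = p_1p_2q_1q_2 = (p_1q_2)(p_2q_1)$, a nontrivial multiplicative coincidence contradicting the Sidon property. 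The K\H{o}v\'{a}ri--S\'{o}s--Tur\'{a}n theorem then gives $|A'|=|E(G)|\ll a\sqrt{b} \asymp N^{2/3}\asymp |A|^{2/3}$.

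The crux of the argument, and what I expect to be the main conceptual obstacle, is identifying the correct asymmetric sizing $a\asymp |A|^{1/3}$, $b\asymp |A|^{2/3}$. The symmetric choice $a\asymp b\asymp |A|^{1/2}$ would yield only $s_*(A)\ll |A|^{3/4}$ via the same K\H{o}v\'{a}ri--S\'{o}s--Tur\'{a}n bound, far short of $|A|^{2/3}$; it is the pronounced imbalance $b/a\asymp |A|^{1/3}$ that drives the exponent down to $2/3$. Once this sizing is fixed, the remaining verifications — checking that $|A|=ab$, bounding $\max A$, and the short case analysis showing that a $K_{2,2}$ really does produce a Sidon violation — are routine.
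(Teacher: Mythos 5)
Your proposal is correct and takes essentially the same approach as the paper. The paper also builds $A$ as a product $P\cdot Q$ of two disjoint prime sets whose sizes are, up to logarithms, $|A|^{1/3}$ and $|A|^{2/3}$ (it takes $P$ to be the primes up to $n$ and $Q$ the primes in $(n, n^2/\log n]$, giving $|P|\approx n/\log n$ and $|Q|\approx n^2/\log^2 n$ by the prime number theorem), views a multiplicatively Sidon subset as a $C_4$-free bipartite graph on $P\sqcup Q$, and applies the K\H{o}v\'{a}ri--S\'{o}s--Tur\'{a}n bound; the only cosmetic differences are that you take the first $a$ and next $b$ primes rather than primes in prescribed intervals, and that for the additive bound you invoke $s_+([M])\ll\sqrt{M}$ directly whereas the paper derives the equivalent estimate via $|A+A|\ll|A|\log^2|A|$ and the fact that a Sidon set of size $m$ forces $\binom{m}{2}+m$ distinct sums. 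Your remark that the asymmetric sizing $a\asymp |A|^{1/3}$, $b\asymp |A|^{2/3}$ is what pushes the K\H{o}v\'{a}ri--S\'{o}s--Tur\'{a}n exponent down to $2/3$ (versus $3/4$ for the balanced choice) correctly identifies the key optimization implicit in the paper's choice of $Q$.
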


\subsection{Notation} Throughout the paper, the standard notation
$\ll,\gg$ and respectively $O$ and $\Omega$ is applied to positive quantities in the usual way. That is, $X\gg Y$, $Y \ll X,$ $X=\Omega(Y)$ and $Y=O(X)$ all mean that $X\geq cY$, for some absolute constant $c>0$. If both $X \ll Y$ and $Y \ll X$ hold we write $X \approx Y$, or equivalently $X= \Theta(Y)$. All logarithms are in base $e$. For a set $A$, the notation $A(\cdot)$ is used for its characteristic function.
\section{Construction}

\begin{proof}[Proof of Theorem \ref{thm:main}]

For a sufficiently large integer $n$, define $P$ to be the set of all primes less than or equal to $n$, and define 
\[
Q=\left \{n<q \leq \frac{n^2}{\log n} : q \text{ prime} \right \} .
\] 
The construction is simply the product set
\[
A:=P \cdot Q=\left \{p\cdot q : p,q \text{ prime, } p\leq n < q \leq \frac{n^2}{\log n} \right \}.
\]
$A$ is a fairly dense subset of the set $\{1,2,\dots,n^3/\log n\}$, which implies that $A+A$ is small and thus $s_+(A)$ is also small. Indeed, by the Fundamental Theorem of Arithmetic and the Prime Number Theorem, 
\[
|A| = |P||Q| \approx \frac{n^3}{\log^3n}. 
\]
Since $A+A \subset \left \{1,\dots,\frac{2n^3}{\log n} \right \}$, it follows that 
\begin{equation} \label{sumset}
|A+A| \ll |A|\log^2|A|.
\end{equation} 
Furthermore, since a Sidon set of size $m$ contained in $A$ gives rise to $\binom{m}{2}+m$ distinct sums in $A+A$, it follows that
\[
s_+(A) \ll |A+A|^{1/2} \ll |A|^{1/2} \log |A|.
\]

It remains to prove the upper bound
\begin{equation} \label{aim}
s_*(A) \ll \frac{n^2}{\log^2 n} \approx |A|^{2/3}.
\end{equation}
 To do this, we appeal to an argument Erd\H{o}s \cite{Erdos} used in order to establish the existence of a set with very small additive energy but still having fairly small $s_+(A)$. What follows can be viewed as an adaptation of this argument in the multiplicative setting.

We view the elements of $A$ as edges of a complete bipartite graph $G=(V,A)$ with vertex set $V =P \sqcup Q$, and with the edge between $p \in P$ and $q \in Q$ labelled by the product $pq$. 

Suppose that $E \subset A$ is a multiplicative Sidon set. Then $E$ does not give rise to any copies of the cycle $C_4$ in $G$. Indeed, suppose that we have four edges in $E$ which form a cycle. Then the cycle must be of the form $pq, qp', p'q',q'p$. But if all of these four edges are in $E$ then we have 
\[
(pq)(p'q')=(p'q)(pq'),
\]
which contradicts the fact that $E$ is a multiplicative Sidon set.

On the other hand, any $E \subset A$ which does not contain a copy of $C_4$ has size $|E| \ll \frac{n^2}{\log^2 n}$ by the K\H{o}v\'{a}ri-S\'{o}s-Tur\'{a}n Theorem. For completeness, we give a quick proof below using an application of Cauchy-Schwarz:

\begin{align*}
|E|^2&= \left( \sum_{q \in Q} \sum_{p \in P} E(pq) \right )^2
\\& \leq |Q| \sum_{q \in Q} \left (\sum_{p \in P} E(pq) \right )^2
\\ & \ll \frac{n^2}{\log^2 n} \sum_{p,p' \in P} \sum_{q \in Q} E(pq)E(p'q)
\\&=\frac{n^2}{\log^2 n} \left ( |E|+\sum_{p,p' \in P : p \neq p'} \sum_{q \in Q} E(pq)E(p'q) \right )
\\& \leq \frac{n^2}{\log^2 n} \left ( |E|+|P|^2 \right ) \ll  \frac{n^2}{\log^2 n} \left ( |E|+\frac{n^2}{\log^2 n} \right ).
\end{align*}
If the sum $|E|+\frac{n^2}{\log^2 n} $ is dominated by the second term then we are done. Therefore we may assume otherwise, and the last inequality gives
\[
|E|^2 \ll \frac{n^2}{\log^2 n} \left ( |E|+\frac{n^2}{\log^2 n} \right ) \ll \frac{n^2}{\log^2 n} |E|.
\]
We have proved that any subset of size greater than $Cn^2/\log^2 n$, for some constant $C$, contains a $C_4$, and thus is not a multiplicative Sidon set. This proves \eqref{aim} and thus completes the proof of the theorem.
\end{proof}

\section{Connections to additive and multiplicative energy}
\subsection{The relationship between $s_+(A)$ and additive energy}

The additive energy of $A$, denoted $E_+(A)$, is the number of ordered quadruplues  \linebreak $(a,b,c,d) \in A \times A \times A \times A$ such that
\[
a+b=c+d.
\]
Note that $E_+(A)$ also counts the trivial solutions, and so we have $E_+(A) \geq |A|^2$.
One may suspect that if $A$ has small additive energy then $s_+(A)$ must be large. This is true to some extent, as  the following lemma shows.

\begin{lemma} \label{lem:random} For any finite $A \subset \mathbb R$
\[
s_+(A) \gg \frac{|A|^{4/3}}{(E_+(A))^{1/3}}.
\]
\end{lemma}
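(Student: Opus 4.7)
The plan is to apply the standard probabilistic sampling-and-deletion argument. I would form a random subset $A' \subseteq A$ by including each element independently with probability $p$, where $p \in (0,1]$ is to be optimized. Then the expected size of $A'$ is $p|A|$. On the other hand, letting $T$ denote the set of ordered quadruples $(a,b,c,d) \in A^4$ with $a+b=c+d$ and $\{a,b\} \neq \{c,d\}$, we have $|T| \le E_+(A)$, and the expected number of such ``bad'' quadruples surviving entirely in $A'$ is at most $p^4 E_+(A)$.

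Next, I would apply a deletion step: from each non-trivial additive quadruple lying in $A'$, remove one of the four elements. The resulting set $A'' \subseteq A'$ contains no non-trivial additive quadruples among its elements, hence is a Sidon set. Taking expectations, there exists an outcome for which
\[
|A''| \;\ge\; p|A| - p^4 E_+(A).
\]
I would then optimize by setting $p = c \bigl(|A|/E_+(A)\bigr)^{1/3}$ for a suitable small absolute constant $c>0$; this balances the two terms and yields
\[
|A''| \;\gg\; \frac{|A|^{4/3}}{(E_+(A))^{1/3}},
\]
which is exactly the claimed lower bound on $s_+(A)$.

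The only technical point to check is that the choice of $p$ lies in $(0,1]$. This follows from the trivial lower bound $E_+(A) \ge |A|^2$ (coming from the diagonal solutions), which forces $\bigl(|A|/E_+(A)\bigr)^{1/3} \le |A|^{-1/3} \le 1$. There is no real obstacle here; the main point is bookkeeping to ensure that the quadruples counted in $E_+(A)$ really do include all obstructions to being Sidon (the trivial ones contribute $O(|A|^2)$ which is absorbed into $E_+(A)$ and causes no issue). So the argument is essentially routine, and I would expect the write-up to consist of little more than the definition of the random set, the linearity-of-expectation calculation, and the optimization of $p$.
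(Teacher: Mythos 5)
The paper does not actually write out a proof of this lemma; it states only that ``the proof of Lemma~\ref{lem:random} uses a simple probabilistic argument, and is implicit in the work of Alon and Erd\H{o}s''. Your proposal correctly identifies that argument (random sampling plus deletion), so the overall approach is the intended one. However, as written there is a genuine technical gap. You claim that the expected number of non-trivial quadruples surviving in $A'$ is at most $p^4E_+(A)$, but this is false: a non-trivial solution to $a+b=c+d$ need not consist of four distinct elements. The degenerate non-trivial solutions are exactly those with a repeated entry, i.e. $2a=c+d$ with $a,c,d$ distinct (a $3$-term arithmetic progression in $A$), and such a configuration survives in $A'$ with probability $p^3$, not $p^4$. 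Under the paper's definition of a Sidon set (forbidding all $\{a,b\}\neq\{c,d\}$), these configurations must be destroyed too.

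This actually matters quantitatively. Bounding the number of such degenerate triples crudely by $|A|^2$ and comparing $p^3|A|^2$ with $p|A|$ forces $p\ll|A|^{-1/2}$, which is incompatible with the choice $p\approx(|A|/E_+(A))^{1/3}$ whenever $E_+(A)\ll|A|^{5/2}$ --- precisely the regime where the lemma is supposed to improve on the $|A|^{1/2}$ bound \eqref{KSS}. The fix requires one more idea: writing $T_3(A)=\sum_{a\in A}r_{A+A}(2a)$, Cauchy--Schwarz gives $T_3(A)\leq\bigl(|A|\,E_+(A)\bigr)^{1/2}$, and then $p^3T_3(A)\ll p|A|$ holds for all $A$ (using only $E_+(A)\geq|A|^2$), so that after deleting one element from each surviving $4$-element configuration \emph{and} each surviving degenerate triple the argument closes. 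You should add this estimate; without it, the step ``expected bad count $\leq p^4E_+(A)$'' does not hold and the optimization of $p$ does not yield the stated bound in the relevant range.
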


The proof of Lemma \ref{lem:random} uses a simple probabilistic argument, and is implicit in the work of Alon and Erd\H{o}s \cite{AE}. The multiplicative analogue of Lemma \ref{lem:random} (with $s_*(A)$ and $E_*(A)$ in place of $s_+(A)$ and $E_+(A)$ respectively) holds via the same reasoning. This lemma overtakes the bound \eqref{KSS} when $E_+(A) \ll |A|^{5/2}$. Combining these two results together, we have
\begin{equation}\label{combo}
s_+(A) \gg  \max \left \{ \frac{|A|^{4/3}}{(E_+(A))^{1/3}}, |A|^{1/2} \right \}.
\end{equation}

The purpose of this section is to make the observation that the combined bound \eqref{combo} is in fact optimal. This is a consequence of the following result of Kohayakawa, Lee and R\"{o}dl \cite{KLR} (we only state the parts of the result which are relevant to our analysis).

\begin{theorem}[\cite{KLR}, Theorem 1.1] \label{thm:KLR}
Let $1/3 \leq a \leq 1$ be a fixed constant and $m=n^a(1+o(1))$. Let $B \subseteq [n]$ be a random set of size $m$ (i.e. $B$ is chosen randomly from all sets of size $m$). Then, almost surely,
\begin{enumerate}
    \item if $1/3 \leq a \leq 2/3$ then $s_+(B) =n^{1/3+o(1)}$,
    \item if $2/3 \leq a \leq 1$ then $s_+(B)=|B|^{1/2+o(1)}$.
\end{enumerate}
\end{theorem}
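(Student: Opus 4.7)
The plan is to establish the lower and upper bounds separately: the lower bounds follow from tools already present in the excerpt, while the upper bounds reduce to a single nontrivial enumeration estimate.

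For the lower bound, in the regime $a \in [2/3,1]$ the bound \eqref{KSS} applied directly to $B$ gives $s_+(B) \gg |B|^{1/2}$ deterministically, already matching the claimed $|B|^{1/2+o(1)}$. For $a \in [1/3, 2/3]$ I would first control the additive energy of $B$: since the number of quadruples $(x,y,z,w) \in [n]^4$ with $x+y = z+w$ is $\Theta(n^3)$ and each lies in $B^4$ with probability roughly $(m/n)^4$, a first-moment calculation together with a Chebyshev bound shows that the non-trivial additive energy of $B$ is $\Theta(m^4/n)$ almost surely throughout this range. Feeding this into the Alon--Erd\H{o}s probabilistic deletion argument (from which Lemma \ref{lem:random} is extracted, but applied with just the non-trivial part of $E_+$ so that the bound is non-vacuous for $a<1/2$) then yields
\[
s_+(B) \;\gg\; \frac{|B|^{4/3}}{(m^4/n)^{1/3}} \;=\; n^{1/3},
\]
as claimed.

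For the upper bound, my approach is a first-moment argument. Writing $F(n,s)$ for the number of Sidon subsets of $[n]$ of size $s$, the expected number of Sidon $s$-subsets of $B$ equals
\[
F(n,s)\cdot \frac{\binom{n-s}{m-s}}{\binom{n}{m}} \;\le\; F(n,s)\,(m/n)^s,
\]
so Markov plus a union bound reduces the task to exhibiting a threshold $s^\star$ (taken as $n^{1/3+\eta}$ in regime (1) and $m^{1/2+\eta}$ in regime (2)) above which this expectation is $o(1)$.

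The main obstacle is thus an enumeration bound on $F(n,s)$ strong enough to drive $F(n,s)(m/n)^s$ to zero at these thresholds. The trivial bound $F(n,s)\le \binom{n}{s}$ is hopelessly weak: it gives only $\binom{n}{s}(m/n)^s \le (em/s)^s$, which is not $o(1)$ unless $s \gg m$. What one needs is a container-style estimate of the shape
\[
F(n,s) \;\le\; T \cdot \binom{(1+o(1))\,n^{1/2}}{s},
\]
with only a subexponential number $T$ of ``almost-Sidon'' containers each of size roughly $n^{1/2}$. Such a bound is obtained by applying the hypergraph container method of Balogh--Morris--Samotij and Saxton--Thomason to the $3$-uniform hypergraph on $[n]$ whose edges encode the non-trivial quadruples $a+b=c+d$, fed by a supersaturation lemma counting such quadruples in every dense subset of $[n]$. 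Assembling this supersaturation--container package is the substantive step; once in place, verifying that the resulting bound gives $o(1)$ at both thresholds is a routine calculation.
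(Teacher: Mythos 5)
This statement is quoted verbatim from Kohayakawa, Lee and R\"odl \cite{KLR}; the paper under review offers no proof of it, so there is no ``paper's own proof'' to compare your attempt against. Assessed on its own merits, your outline captures the right overall framework: lower bounds via Koml\'os--Sulyok--Szemer\'edi in the dense range $a \ge 2/3$, and via a deletion argument driven by an upper bound on the non-trivial additive energy of $B$ in the sparse range; upper bounds via a first-moment count of Sidon $s$-subsets of $B$, reduced to an enumeration estimate for Sidon sets of a given size in $[n]$. The one clearly anachronistic ingredient is the proposed supersaturation-plus-container package: \cite{KLR} is a 2011 paper and predates the hypergraph container method of Balogh--Morris--Samotij and Saxton--Thomason, so the original argument must obtain the Sidon-set enumeration bound by other, more hands-on means; the container route you sketch is essentially the one used in the later work of Kohayakawa, Lee, R\"odl and Samotij on counting $B_h$-sets, and is a clean modern substitute that does yield a bound of the required strength. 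Two smaller technical notes. The hypergraph encoding non-trivial solutions of $a+b=c+d$ is naturally $4$-uniform, with additional $3$-uniform edges from degenerate solutions $a+b=2c$, rather than $3$-uniform as written; this does not affect the method. And for the sparse lower bound you do not actually need two-sided concentration of the non-trivial energy at $\Theta(m^4/n)$: a one-sided Markov tail bound, giving $\mathbb P\bigl(E_+^0(B) > 10\,\mathbb E\,E_+^0(B)\bigr) \le 1/10$ say, already suffices to run the deletion argument with positive probability, and is easier to justify than a Chebyshev variance estimate near $a = 1/3$, where the count of non-trivial quadruples is comparatively small.
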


The $o(1)$ terms in the exponents suppress logarithmic factors. A more precise version of this statement is given in \cite[Theorems 2.2-2.4]{KLR}.

For a random set $B \subset [n]$ of size $m$, the expected size of $E_+(B)$ is 
\begin{equation}\label{energy}
cn^3 \frac{\binom{n-4}{m-4}}{\binom{n}{m}}=c'\frac{m^4}{n}.
\end{equation}

Part (2) of Theorem \ref{thm:KLR} implies that, for a random set $B \subset [n]$ of size \linebreak $n^{2/3+o(1)}\leq|B| \leq n$, the estimate $s_+(B) \gg |B|^{1/2}$ contained in \eqref{combo} is tight, up to the $o(1)$ factor. By \eqref{energy} these sets will typically have far from maximal additive energy, and taking $m$ close to $n^{2/3}$ yields a set $B$ with $E_+(B) \approx |B|^{5/2}$ and $s_+(B)$ very small.

Furthermore we have, with high probability,
\[
\frac{ |B|^{4/3}}{E_+(B)^{1/3}} \approx n^{1/3},
\]
and thus taking $B$ to be a random subset of $[n]$ with size $n^a$, $1/3 \leq a \leq 2/3$, the estimate from Lemma \ref{lem:random} matches the information $s_+(B)=n^{1/3 +o(1)}$ given by part (1) of Theorem \ref{thm:KLR}, up to the $o(1)$ factor.


\subsection{A variant of the Klurman-Pohoata Conjecture}

Conjecture \ref{conj:main} could be viewed as a possible line of attack for the sum-product problem, since the inequalities
\[
|A+A| \gg s_+(A)^2 ,\,\,\,\,\,\, |AA| \gg s_*(A)^2 
\]
imply that a positive answer to Conjecture \ref{conj} would give a non-trivial sum-product bound. Unfortunately, the construction in Theorem \ref{thm:main} shows that even the best possible result in this direction would not yield quantitative improvements to known sum-product inequalities.

However, it is not necessary that a set be additively/multiplicatively Sidon in order for it to determine many sums/products. A weaker assumption that the additive or multiplicative energy is small would suffice, in light of the usual Cauchy-Schwarz energy bounds
\[
E_+(A) \geq \frac{|A|^4}{|A+A|},\,\,\,\,\,\,\,E_*(A) \geq \frac{|A|^4}{|AA|}.
\]
With this in mind, we propose a variant of Conjecture \ref{conj:main}. For a set $A \subset \mathbb R$, define\footnote{The multiplicative constant $2$ in the definition of $t_+(A)$ is not particularly important, but is chosen so that at least half of the contributions to $E_+(A')$ come from the $|A'|^2$ trivial solutions.}  $t_+(A)$ to be the size of the largest subset $A' \subset A$ such that 
\[
E_+(A') <2|A'|^2.
\]
Similarly, $t_*(A)$ denotes the size of the largest $A' \subset A$ such that $E_*(A') <2|A'|^2$.

\begin{question} \label{question}
For what value of $\kappa>0$ is it true that any finite set $A \subseteq \mathbb R$ satisfies
\[
\max \{ t_+(A), t_*(A)\} \geq |A|^{\kappa}.
\]
\end{question}
The bound
\begin{equation} \label{basic}
\max \{ t_+(A), t_*(A)\} \gg |A|^{1/2}
\end{equation}
follows immediately from the result of Komlos, Sulyok and Szemer\'{e}di \cite{KSS}, and can also be proved by much simpler means. For this question, an improvement of \eqref{basic} follows from the Balog-Wooley Theorem. We will use the following result of Rudnev, Shkredov and Stevens \cite{EnergyVariant}.

\begin{theorem} \label{thm:BW}
Let $A \subset \mathbb R$. Then there exists $A' \subset A$ with $|A'| \geq  |A|/2$ and
\[
\min \{E_+(A'), E^*(A')\} \ll |A|^{11/4}. 
\]
\end{theorem}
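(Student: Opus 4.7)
The plan is to follow the approach of Rudnev, Shkredov and Stevens \cite{EnergyVariant}, which refines the original Balog-Wooley decomposition by combining a Markov-style weight removal with a sharper sum-product energy inequality arising from the Szemer\'edi-Trotter theorem.

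First I would introduce the popularity weights $\sigma_+(a) := \sum_{b \in A} r_{A+A}(a+b)$ and $\sigma_*(a) := \sum_{b \in A} r_{AA}(ab)$, where $r_{A+A}(x) = |\{(x_1,x_2) \in A \times A : x_1+x_2 = x\}|$ and $r_{AA}$ is defined analogously. These weights satisfy the identities $\sum_{a \in A} \sigma_+(a) = E_+(A)$ and $\sum_{a \in A} \sigma_*(a) = E_*(A)$. Two applications of Markov's inequality with threshold $8$ remove at most $|A|/4$ elements on which $\sigma_+(a) > 8 E_+(A)/|A|$ and at most $|A|/4$ on which $\sigma_*(a) > 8 E_*(A)/|A|$. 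What remains is a subset $A_0 \subseteq A$ with $|A_0| \geq |A|/2$ on which both popularity functions are uniformly controlled by their averages on $A$.

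Second, the key algebraic input is a sum-product energy estimate derived from the Szemer\'edi-Trotter theorem, coupling the additive and multiplicative representation functions of $A$. Translating into a point-line incidence problem, one obtains a bound of qualitative form
\[
E_+(A_0) \cdot E_*(A_0) \ll |A|^{11/2 + o(1)},
\]
possibly after a dyadic decomposition separating the additively-rich from the multiplicatively-rich level sets. Since $\min\{E_+(A_0), E_*(A_0)\}^2 \leq E_+(A_0) \cdot E_*(A_0)$, this immediately yields $\min\{E_+(A_0), E_*(A_0)\} \ll |A|^{11/4}$ after absorbing the logarithmic factors into the implied constant.

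The main obstacle is producing the incidence inequality with exactly the exponent that yields $11/4$ rather than a weaker number: the classical Elekes approach used in the original Balog-Wooley argument gives a strictly weaker exponent, and the improvement requires a more refined use of Szemer\'edi-Trotter together with careful bookkeeping of how popularity mass is distributed across dyadic level sets. A secondary difficulty is calibrating the Markov thresholds so that the surviving subset has size at least $|A|/2$ while the incidence bound is strong enough to tolerate the logarithmic losses inherent in any pigeonholing — otherwise the exponent $11/4$ would be inflated by a $\log|A|$ factor. Both difficulties are resolved by the quantitative form of the sum-product energy bound established in \cite{EnergyVariant}.
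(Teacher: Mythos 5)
This theorem is not proved in the paper at all: it is imported verbatim as a result of Rudnev, Shkredov and Stevens \cite{EnergyVariant}, and the paper offers no argument for it. There is therefore no ``paper's own proof'' to compare against; the correct answer to the implicit exercise is simply to cite \cite{EnergyVariant}.

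As for your sketch on its own terms: the first step (pruning $A$ by Markov's inequality on the popularity weights $\sigma_+$, $\sigma_*$ to obtain $A_0$ with $|A_0| \geq |A|/2$) is a standard and correct reduction. The problem is the second step, where the entire content of the theorem lives. You assert a bound of the form $E_+(A_0) \cdot E_*(A_0) \ll |A|^{11/2 + o(1)}$ and then say that the ``main obstacle'' of actually producing this inequality is ``resolved by the quantitative form of the sum-product energy bound established in \cite{EnergyVariant}.'' That is circular: you are deferring the heart of the proof to the very paper whose theorem you are trying to prove. A genuine proof would need to carry out the Szemer\'edi--Trotter incidence argument (set up the point set and line/curve family, run the dyadic pigeonholing over level sets of $r_{A+A}$ and $r_{AA}$ restricted to $A_0$, and track the exponents) rather than gesture at it. There is also a technical error at the end: you claim the $|A|^{o(1)}$ factor can be ``absorbed into the implied constant,'' but an $|A|^{o(1)}$ loss is not a bounded constant and cannot be absorbed; the statement as given in the paper has no such loss, so either the incidence step must be executed cleanly enough to avoid it, or the theorem statement would need an $o(1)$ in the exponent.
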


We prove the following.
\begin{theorem} 
For any $A \subset \mathbb R$,
\[
\max \{ t_+(A), t_*(A)\}  \gg |A|^{5/8}.
\]
\end{theorem}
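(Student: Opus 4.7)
The plan is to combine Theorem \ref{thm:BW} with a probabilistic subset selection. First, apply Theorem \ref{thm:BW} to obtain $A' \subset A$ with $|A'| \geq |A|/2$ and $\min\{E_+(A'), E_*(A')\} \ll |A|^{11/4}$. By the symmetry between $t_+$ and $t_*$ in the statement (addition and multiplication play interchangeable roles), we may assume without loss of generality that $E_+(A') \ll |A|^{11/4}$. It then suffices to construct a subset $B \subset A'$ with $|B| \gg |A|^{5/8}$ and $E_+(B) < 2|B|^2$, since then $t_+(A) \geq t_+(A') \geq |B|$.

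To produce $B$, I would use a random sampling plus alteration argument. Include each element of $A'$ independently with probability $p \approx |A|^{-3/8}$, so that $\mathbb{E}|B_0| \approx |A|^{5/8}$ for the initial random set $B_0$. The condition $E_+(B) < 2|B|^2$ is equivalent to the number of non-trivial additive quadruples in $B$ being strictly less than $|B|$, which is a strict relaxation of the Sidon condition and the key source of the exponent gain over \eqref{basic}. Bound the expected non-trivial quadruple count in $B_0$ using $E_+(A') \ll |A|^{11/4}$ for the four-element contributions, together with a Cauchy--Schwarz bound $\ll E_+(A')^{1/2}|A'|^{1/2}$ on the number of three-term arithmetic progressions in $A'$ for the three-element contributions. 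Then perform an alteration step: remove elements of $B_0$ participating in the largest number of non-trivial quadruples, chosen so that the final set $B$ has both $|B| \gg |A|^{5/8}$ and fewer than $|B|$ non-trivial additive quadruples.

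The main obstacle is the tight balancing in the alteration step: each deletion reduces the quadruple count but also shrinks $B$, and the exponents must align at $5/8$. The strict inequality $E_+(B) < 2|B|^2$, rather than the Sidon equality $E_+(B) = 2|B|^2 - |B|$, provides precisely the slack needed to convert the energy input $|A|^{11/4}$ of Theorem \ref{thm:BW} into the exponent $5/8$; without this relaxation, the best that Lemma \ref{lem:random} would yield from the same energy bound is the exponent $1/2$ already implicit in \eqref{basic}.
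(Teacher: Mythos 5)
Your overall strategy matches the paper's almost exactly: apply Theorem~\ref{thm:BW} to obtain $A'$ with one of the energies $\ll |A|^{11/4}$, assume WLOG it is $E_+(A')$, then take a $p$-random subset with $p\approx |A|^{-3/8}$ so that the expected size is $\approx |A|^{5/8}$ and the expected non-trivial quadruple count is controlled by $p^4E_+(A')$ plus a $p^3$ term for three-term progressions. The only structural difference is the last step: the paper finishes with a second-moment argument (Markov for the quadruple count, Chebyshev for the size) to find a single good $A''$ directly, whereas you propose a deletion/alteration step. Your Cauchy--Schwarz bound $\ll E_+(A')^{1/2}|A'|^{1/2}$ for the number of $3$-APs is slightly sharper than the trivial $|A'|^2$ the paper uses, but this does not change the exponent since the four-element term $p^4|A|^{11/4}$ dominates.

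However, there is a quantitative gap in the alteration step as you describe it, and it is worth spelling out because you have correctly identified the exact threshold but your method cannot reach it. You observe, correctly, that $E_+(B)<2|B|^2$ is \emph{equivalent} to the number of non-trivial ordered quadruples being $<|B|$, since the trivial count is exactly $2|B|^2-|B|$. But with $p\approx |A|^{-3/8}$ the expected non-trivial quadruple count in $B_0$ is $\Theta(p^4|A|^{11/4})=\Theta(|A|^{5/4})=\Theta(|B_0|^2)$, which is of order $|B_0|$ \emph{squared}, not $|B_0|$. Deleting high-degree elements cannot close a gap of this size: the average element lies in $\approx |A|^{5/8}$ non-trivial quadruples, so bringing the count below $|B|\approx |A|^{5/8}$ would require removing essentially the entire set. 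Forcing the quadruple count below $|B|$ by choosing $p$ smaller requires $p\lesssim |A|^{-7/12}$, which gives $|B|\lesssim |A|^{5/12}$, worse than the trivial bound \eqref{basic}. In other words, your statement that ``the exponents must align at $5/8$'' is where the argument breaks: they do not align for the target $<|B|$. What the random sampling actually delivers is non-trivial count $\ll |B|^2$, i.e.\ $E_+(B)<C|B|^2$ for some absolute constant $C>2$. (For what it is worth, the paper's own final line, deducing $E_+(A'')<2|A''|^2$ from $E_+^0(A'')<|A''|^2$, has the same off-by-a-constant issue, since the trivial count is $2|A''|^2-|A''|$ rather than $|A''|^2$; the footnote attached to the definition of $t_+$ explicitly disclaims the precise constant, which is why the result survives.) To make your write-up correct you should either state the bound for $t_+$ with a relaxed constant in its definition, or replace the alteration step with the paper's Markov--Chebyshev argument and acknowledge that the output is $E_+(A'')<C|A''|^2$ rather than $<2|A''|^2$.
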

\begin{proof}
Apply Theorem \ref{thm:BW} to obtain a subset $A'$ and suppose that $E_+(A') \leq C|A|^{11/4}$. Let $A''$ be a $p$-random subset of $A'$, with $p=\frac{1}{100C^{1/2}|A|^{3/8}}$. The expected size of $A''$ is at least $p|A|/2$. Let $E_+^0(A'')$ denote the number of non-trivial solutions to 
\[
a_1+a_2=a_3+a_4 \text{ such that } a_i \in A''.
\]
The expected value of $E_+^0(A'')$ is at most
\[
Cp^4|A|^{11/4}+p^3|A|^2.
\]
By Markov's inequality, the probability that 
\begin{equation} \label{event1}
E_+^0(A'') \geq \frac{1}{100}p^2|A|^2
\end{equation}
is at most $1/10$. By Chebyshev, the probability that
\begin{equation} \label{event2}
|A''| <\frac{1}{10}p|A|
\end{equation}
is at most $1/10$. Therefore, with positive probability both events \eqref{event1} and \eqref{event2} do not occur. It therefore follows that there exists a set $A'' \subset A$ such that
\[
|A''| \gg |A|^{5/8}
\]
and
\[
E_+^0(A'') < \frac{1}{100}p^2|A|^2  \leq |A''|^2.
\]
In particular, $E_+(A'') < 2|A''|^2$, and it follows that
\[
t_+(A) \gg |A|^{5/8}. 
\]

If instead we have $E^*(A') \leq C|A|^{11/4}$ at the outset, we can run the same argument in the multiplicative setting and conclude that
\[
t_*(A) \gg |A|^{5/8}.
\]

\end{proof}

Note that a small improvement to this result can be obtained by instead applying a quantitative improvement to Theorem \ref{thm:BW}, due to Shakan \cite{Shakan}, but we have avoided this above in order to simplify the presentation. It can be calculated that this change to the proof results in the improved bound
\[
\max \{ t_+(A), t_*(A)\}  \gg |A|^{33/52-o(1)}.
\]
Note that $33/52=5/8+1/104$.

The construction in the proof of Theorem \ref{thm:main} does not give a non-trivial construction for Question \ref{question}. However, the original construction of Balog and Wooley \ref{BW} does yield the following.

\begin{theorem} \label{thm:tupper}
There exists a finite set $A \subset \mathbb R$ such that
\[
\max \{ t_+(A), t_*(A)\}  \ll |A|^{5/6}
\]
\end{theorem}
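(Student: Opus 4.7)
The plan is to take the set $A := P \cdot G$, where $P = \{1, 2, \ldots, n\}$ and $G = \{g, g^2, \ldots, g^n\}$ for a sufficiently large integer $g$ (any $g > n^2$ will do). Because $g$ exceeds every element of $P$, the representation $a = i g^j$ with $i \in P$ and $g^j \in G$ is unique, so $|A| = n^2 =: N$. I will in fact prove the stronger bound $\max\{t_+(A), t_*(A)\} \leq 2 n^{3/2} = 2 N^{3/4}$, which easily implies the stated $O(N^{5/6})$.

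The key structural ingredient is a base-$g$ decoupling: for $g$ sufficiently large relative to $n$, any additive identity $i_1 g^{j_1} + i_2 g^{j_2} = i_3 g^{j_3} + i_4 g^{j_4}$ with all entries in $A$ is either trivial, in the sense that $\{(i_1, j_1), (i_2, j_2)\} = \{(i_3, j_3), (i_4, j_4)\}$, or has $j_1 = j_2 = j_3 = j_4$, i.e.\ all four elements lie in a common fiber of the projection $i g^j \mapsto g^j$. This is proved by comparing the dominant powers of $g$ on both sides. Writing $S_j := \{i \in P : i g^j \in A'\}$ and $d_j := |S_j|$, $m := |A'| = \sum_j d_j$, the decoupling gives the identity
\begin{equation*}
E_+(A') = 2 m^2 + \sum_{j=1}^n E_+(S_j) - 2 \sum_{j=1}^n d_j^2,
\end{equation*}
so that $E_+(A') < 2 m^2$ is equivalent to $\sum_j E_+(S_j) < 2 \sum_j d_j^2$.

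To bound $t_+(A)$, I combine two Cauchy--Schwarz inequalities. Since $S_j \subset [n]$ has $|S_j + S_j| \leq 2n - 1$, the first gives $E_+(S_j) \geq d_j^4 / (2n - 1)$; summing yields $\sum_j d_j^4 < 4n \sum_j d_j^2$. The second Cauchy--Schwarz, $(\sum_j d_j^2)^2 \leq n \sum_j d_j^4$ (applied to the $n$-term sum), then forces $\sum_j d_j^2 \leq 4 n^2$, and hence $m^2 \leq n \sum_j d_j^2 \leq 4 n^3$. Therefore $t_+(A) \leq 2 n^{3/2}$.

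To bound $t_*(A)$, note that by the same base-$g$ uniqueness the product set decomposes cleanly: $|A \cdot A| = |P \cdot P| \cdot |G \cdot G| \leq n^2 (2n - 1) < 2 n^3$. Cauchy--Schwarz then gives $E_*(A') \geq m^4 / |A' \cdot A'| \geq m^4 / (2 n^3)$, and the condition $E_*(A') < 2 m^2$ forces $m < 2 n^{3/2}$, so $t_*(A) \leq 2 n^{3/2}$. The main obstacle is verifying the additive decoupling cleanly (this is a slightly fiddly case analysis on which $j_k$'s coincide and in what order); once that identity is in hand, the remainder is a routine double application of Cauchy--Schwarz. Combining the two bounds yields $\max\{t_+(A), t_*(A)\} \leq 2 n^{3/2} = 2 |A|^{3/4} \ll |A|^{5/6}$.
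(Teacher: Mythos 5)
Your proof is correct, and it takes a genuinely different route from the paper. The paper uses the Balog--Wooley construction $A=\{(2i-1)2^{j}: i\in[N^{2}],\, j\in[N]\}$, i.e.\ $N$ arithmetic progressions each of length $N^{2}$, and argues one-sidedly: it drops to the inequality $E_{+}(A')\ge \sum_{j}E_{+}(A'\cap A_{j})$, splits the fibers into ``large'' and ``small'' against a threshold $2|A'|^{1/3}N^{2/3}$, and closes with a single Cauchy--Schwarz energy bound on the large fibers. Your construction replaces the base $2$ with a base $g>n^{2}$ and \emph{balances} the fiber count and fiber length (both equal to $n$). The large base buys you something the paper's construction does not have: genuine additive decoupling across fibers, which upgrades the one-sided inequality to the exact identity $E_{+}(A')=2m^{2}+\sum_{j}E_{+}(S_{j})-2\sum_{j}d_{j}^{2}$. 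That identity turns the condition $E_{+}(A')<2m^{2}$ into the much stronger constraint $\sum_{j}E_{+}(S_{j})<2\sum_{j}d_{j}^{2}$ (note $\sum d_{j}^{2}$, not $m^{2}$, on the right), and two routine applications of Cauchy--Schwarz then finish, with no thresholding step needed. I have checked the fiddly case analysis behind the decoupling (compare the dominant power of $g$ on each side, then reduce modulo $g$) and it does go through whenever $g>2n$, so $g>n^{2}$ is ample; the energy identity and both Cauchy--Schwarz chains are also correct.

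It is worth flagging that your argument actually yields a \emph{stronger} exponent than the theorem as stated: you obtain $\max\{t_{+}(A),t_{*}(A)\}\le 2|A|^{3/4}$, whereas the paper's proof gives $\ll|A|^{5/6}$. The gain comes precisely from the decoupling identity: if one only has the one-sided bound $E_{+}(A')\ge\sum_{j}E_{+}(S_{j})$ (as one must for the paper's non-decoupled base-$2$ construction), the same chain of estimates with $k$ fibers of length $\ell$ gives $m\lesssim k^{3/2}\ell^{1/2}$, which is trivial at the balanced choice $k=\ell$ and forces the unbalanced $\ell=k^{2}$, landing at $|A|^{5/6}$. With the exact identity the balanced choice becomes usable and gives $|A|^{3/4}$. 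So your proposal is not just a valid alternative proof; it improves the constant $5/6$ in Theorem~\ref{thm:tupper} to $3/4$.
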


\begin{proof}
Define
\[
A:= \{ (2i-1)2^j : i \in [N^2], j \in [N]\}.
\]
This set is a union of $N$ arithmetic progressions of length $N^2$, and since any solution to $(2i-1)2^j = (2i'-1)2^{j'}$ implies that $i=i'$ and $j=j'$, we have that $|A| = N^3$. Suppose that $|A'| \geq C|A|^{5/6} = CN^{5/2}$ for a sufficiently large constant $C$. Note that since
$$AA \subseteq \{ (2i-1)2^j : i \in [2N^4], j \in [2N]\} $$
we have $|A'A'| \leq |AA| \leq 4N^5$. Applying the Cauchy Schwarz energy bound then yields
$$E_*(A') \geq \frac{|A'|^4}{|A'A'|} \geq \frac{C^2}{4}|A'|^2.$$
Therefore, as long as $C$ is sufficiently large we must have $t_*(A) \leq C|A|^{5/6}$.

To show that $t_+(A) \ll |A|^{5/6}$, we define
$$A_j = \{ (2i-1)2^j : i \in [N^2]\}$$
noting that we have $A = \bigcup_{j\in [N]}A_j$, and that each $A_j$ is an arithmetic progression with $|A_j| = N^2$. Let us define
$$I := \left\{ j \in [N] : |A' \cap A_j| \geq 2|A'|^{1/3}N^{2/3}\right\}.$$ 
We then have
\begin{align*}
|A'|        & = \sum_{j \in [N]} |A' \cap A_j| \\
            & = \sum_{j \in I} |A' \cap A_j| + \sum_{j \notin I} |A' \cap A_j|  \\
            & \leq  \sum_{j \in I} |A' \cap A_j| +  2|A'|^{1/3}N^{5/3}.
\end{align*}
We now bound the energy of $A'$.
\begin{align*}
    E(A') & \geq \sum_{j \in I}E(A' \cap A_j) \\
    & \geq \sum_{j \in I} \frac{|A' \cap A_j|^4}{|A_j + A_j|} \\ & \gg \frac{1}{N^2} \sum_{j \in I}|A' \cap A_j|^4 \\
    & \geq 8|A'| \sum_{j \in I}|A' \cap A_j| \\
    & \geq 8|A'|^2 - 16|A'|^{4/3}N^{5/3}.
\end{align*}
From the final inequality it follows that $E(A') \geq 2|A'|^2$ as long as the constant $C$ is sufficiently large. Therefore $t_+(A) \ll |A|^{5/6}$, proving the result.
\end{proof}
Note that the bound $|AA| \leq N^5$ above is slightly wasteful and can be improved by a logarithmic factor. Using this fact and slightly rebalancing the construction of $A$ above gives the small improvement
\[
\max \{ t_+(A), t_*(A)\}  \ll \frac{|A|^{5/6}}{\log^{c}|A|}
\]
for some constant $c>0$.

\subsection{Connection to Balog-Wooley decomposition}

Conjecture \eqref{conj} is also connected to a question of Balog and Wooley \cite{BW}: For which $\kappa>0$ is it true that any finite set $A \subseteq \mathbb R$ can always be partitioned into two subsets $B$ and $C$, such that
\begin{equation}\label{BW} \max \{ E_+(B), E_*(C)\} \ll |A|^{3 - \kappa}.\end{equation}
A construction is given in \cite{BW} (and which was repeated here in the proof of Theorem \ref{thm:tupper}) proving that any such result must have $\kappa < 2/3$. Although the authors do not go as far as to conjecture that an exponent of $7/3$ is attainable in \eqref{BW}, a similar conjecture is stated in \cite{EnergyVariant}, where the authors conjecture that for all finite $A \subseteq \mathbb R$, there exists a subset $A' \subseteq A$ with $|A'| \geq \frac{|A|}{2}$ such that 
\begin{equation}\label{BWvariant} \min \{ E_+(A'), E_*(A') \} \ll |A|^{7/3+o(1)}.\end{equation}
A positive answer to this conjecture would imply a positive answer to Conjecture \ref{conj:main}. Indeed, utilising Lemma \ref{lem:random} (or possibly its multiplicative analogue), the subset $A'$ satisfies
$$|A|^{5/9-o(1)}\ll |A'|^{5/9-o(1)} \ll \max\{s_+(A'), s_*(A') \} \leq \max\{s_+(A), s_*(A) \},$$
thus giving Conjecture \ref{conj:main} with $c = 1/18-o(1)$. Weaker variants of conjecture \eqref{BWvariant} also imply Conjecture \ref{conj:main}, as long as the exponent is at most $5/2 - \epsilon$. Stating this in the contrapositive, any construction disproving Conjecture \ref{conj:main} also disproves strong Balog Wooley type conjectures.

\section{Further remarks}
\subsection{Small sum set implies large $s_*(A)$?}

Another conjecture of Klurman and Pohoata was the following: for all $\epsilon>0$, and any set $A \subset \mathbb R$
\begin{equation} \label{conj3}
|A+A| \leq K|A| \Rightarrow  s_*(A) \gg_{K,\epsilon} |A|^{1-\epsilon}.
\end{equation} 
By combining the multiplicative analogue of Lemma \ref{lem:random} with Solymosi's \cite{S} bound on the multiplicative energy via sumsets, they proved that
\begin{equation} \label{optimal}
|A+A| \leq K|A| \Rightarrow s_*(A) \gg \frac{|A|^{2/3}}{K^{2/3} (\log |A|)^{1/3}}.
\end{equation}
The construction given in the proof of Theorem \ref{thm:main} disproves the conjecture \eqref{conj3}, at least in the range when $K= \log^c |A|$. Indeed, we recorded in $\eqref{sumset}$ that the construction satisfies $|A+A| \ll |A|\log^2|A|$. Moreover, since we have $s_*(A) \ll  |A|^{2/3}$, this construction also shows that the bound \eqref{optimal} is in fact optimal in this range, up to logarithmic factors.

\subsection{Other constructions}

Theorem \ref{thm:main} was obtained independently by Green and Peluse (private communication). We thank them for sharing their construction with us. The construction is similar to the set $A$ defined in the proof of Theorem \ref{thm:main}. Paraphrasing slightly, they define a set 
\[
B=\{p_1p_2p_3 : 1 \leq  p_i \leq N, \text{ $p_i$ prime} \}.
\]
The set $B$ is a dense enough in $[N^3]$ to ensure that its sum set is small and thus $s_+(B)$ is small. On the other hand, and similarly to our proof of Theorem \ref{thm:main}, an application of the Cauchy-Schwarz inequality can be used to show that any subset of $B$ larger than $C|B|^{2/3}$ must contain four elements of the form 
\[pqr, p'qr, pq'r', p'q'r',\]
which gives rise to a non-trivial multiplicative energy solution $(pqr)(p'q'r')=(p'qr)(pq'r')$.

In a forthcoming paper, Shkredov \cite{Shk} gives another construction of a set with $\max \{s_+(A),s_*(A) \} \ll |A|^{2/3}$. His construction is somewhat different in nature, and comes from taking $A$ to be a sum set of carefully chosen geometric progressions.

\section*{Acknowledgements}
The authors were supported by the Austrian Science Fund FWF Projects P 30405-N32 and P 34180. We are very grateful to Cosmin Pohoata for bringing this problem to our attention, and for several helpful discussions. Thanks also to Ben Green, Oleksiy Klurman, Sarah Peluse, Ilya Shkredov and Sophie Stevens for helpful discussions.

\providecommand{\bysame}{\leavevmode\hbox to3em{\hrulefill}\thinspace}


\begin{thebibliography}{50}


\bibitem{AE} N. Alon and P. Erd\H{o}s, `An application of graph theory to additive number theory', \textit{European J. Combin.} 6 (1985), 201-203.

\bibitem{BW}A. Balog and T. D. Wooley, `A low-energy decomposition theorem', \textit{Q. J. Math.}, 68 (2017), no. 1, 207–226.



\bibitem{Erdos} P. Erd\H{o}s, `Extremal problems in number theory, combinatorics and geometry', in: Proc. Int. Congr. Math., Warszawa, 1983, Vol. 1 1984, pp. 51-70.




\bibitem{Gow} W. T. Gowers, `What are dense Sidon subsets of $\{1,2,\dots,n\}$ like?', Blog post https://gowers.wordpress.com/2012/07/13/what-are-dense-sidon-subsets-of-12-n-like/





\bibitem{KLR} Y. Kohayakawa, S. Lee and V. R\H{o}dl, `The maximum size of a Sidon set contained in a sparse random set of integers', \textit{Proceedings of the Twenty-Second Annual ACM-SIAM Symposium on Discrete Algorithms}, 159-171, SIAM, Philadelphia, PA, 2011. 

\bibitem{KSS} J. Koml\'{o}s, M. Sulyok and E. Szemer\'{e}di, `Linear problems in combinatorial number theory', \textit{Acta Math. Acad. Sci. Hungar.} 26 (1975), 113-121.



\bibitem{OB} K. O'Bryant, `A Complete Annotated Bibliography of Work Related to Sidon Sequences', \textit{Electron. J. Combin.} \#DS11.


\bibitem{KP} C. Pohoata, `Sidon sets and sum-product phenomena', Blog post https://pohoatza.wordpress.com/2021/01/23/sidon-sets-and-sum-product-phenomena/

\bibitem{Raz} O. Raz, `A note on distinct distances', \textit{Combin. Probab. Comput.} 29 (2020), no. 5, 650-663.

\bibitem{EnergyVariant} M. Rudnev, I. D. Shkredov and S. Stevens, `On the energy variant of the sum-product conjecture' \textit{Rev. Mat. Iberoam.}, 36 (2020), no. 1, 207–232.

\bibitem{Shakan} G. Shakan, `On higher energy decompositions and the sum-product phenomenon', \textit{Math. Proc. Cambridge Phil. Soc.} 167 (2019), no. 3, 599-617.

\bibitem{Shk} I. D. Shkredov, `On an application of higher energies to Sidon sets', \textit{Forthcoming}.

\bibitem{S} J. Solymosi, `Bounding multiplicative energy by the sumset', \textit{Adv. Math.} 222 (2009), 402-408.



\bibitem{tv} T. Tao and V. Vu. `Additive combinatorics' \textit{Cambridge University Press} (2006).













\end{thebibliography}
\end{document}